\documentclass[12pt,reqno]{amsart}
\usepackage{amssymb,amsmath,amsthm}
\usepackage{bm}
\usepackage[margin=2cm]{geometry}
\usepackage{amssymb}
\usepackage{graphicx}
\usepackage{color,framed}
\usepackage{cancel}
\usepackage{caption}

\def\R{\mathbb R}

\def\eps{\varepsilon}

\def \dd{\mathrm{d}}

\newcommand{\mmu}{\hat\mu_{\textsc{cl}}}

\newtheorem{theorem}{Theorem}[section]

\theoremstyle{definition}

\newtheorem{remark}[theorem]{Remark}

\title[Advancing fronts with no-slip and singular potentials in partial wetting]{Advancing fronts for the thin-film equation with null slip and repulsive potentials: the case of partial wetting}

\author{Riccardo Durastanti}
\address{Riccardo Durastanti - Department of Mathematics and Applications “Renato Caccioppoli'', University of Naples ``Federico II'' - Via Cintia, Monte S. Angelo I - 80126 Napoli - Italy} \email{riccardo.durastanti@unina.it}

\author{Lorenzo Giacomelli$^*$}
\address{Lorenzo Giacomelli - SBAI Department, Sapienza University of Rome - Via A. Scarpa, 16 - 00161 Roma - Italy} \email{lorenzo.giacomelli@uniroma1.it}

\subjclass[2020]{
35C07, 
34E05, 
35K65, 
35Q35, 
76A20, 
76D08, 
34B08 
}

\keywords{Travelling wave solutions; thin-film equation; lubrication approximation; no-slip paradox}

\begin{document}

\numberwithin{equation}{section}

\maketitle

\begin{abstract}
For negative values of the spreading coefficient (that is, in the so-called ``partial wetting'' regime), we prove that the thin-film equation with zero slip and repulsive potentials $P$ of the form $P(h)\approx h^{1-m}$ as $h\to 0$, $m>1$, admits for any positive speed a one-parameter family of travelling-wave solutions with a contact line and (as in standard slippage models) a logarithmically-corrected linear behaviour as $h\to +\infty$. These waves have locally finite rate of dissipation for any $m>1$ and locally finite energy for any $m\in (1,3)$. The result thus confirms that mildly repulsive potentials effectively resolve the no-slip paradox. The family is parametrized by a thermodynamically consistent contact-line condition which reduces to the classical fixed microscopic contact-angle one if $P\equiv 0$.
\end{abstract}

\section{Introduction and results}

This manuscript is concerned with the following third order nonlinear ODE:
\begin{subequations}
\label{def-sol}
\begin{equation}
\label{TW}
H^2\left(H_{yy} -P'(H)\right)_y=-U.
\end{equation}
Equation \eqref{TW} describes travelling wave solutions to the {\it thin-film equation}, a PDE which models the height $H$ of a liquid film spreading over a horizontal solid substrate in the regime of lubrication approximation \cite{Greenspan,Hoc1,ODB,GO2,KM2}. We are interested in the case where a {\it contact line} exists, i.e., a triple junction where the liquid, the surrounding gas or vapour, and the solid substrate meet. For travelling wave solutions, this amounts  to the existence of a point $y_0\in \R$ where $H(y_0)=0$. Such point may be set to be equal to zero using translation invariance of \eqref{TW}:
\begin{equation}\label{TW-bc0}
H(0)=0, \qquad H>0 \ \mbox{ in $\R_+=(0,+\infty)$.}
\end{equation}
Pointwise solutions $H\in C^3(\R_+)$ to \eqref{TW} satisfying \eqref{TW-bc0} will be called {\it fronts} in what follows. They are advancing if $U>0$ or receding if $U<0$.

\medskip

The nonlinearity $H^2$ in \eqref{TW} corresponds to assuming zero slip at the liquid-solid interface. If $P\equiv 0$, it is well-known that advancing fronts do not exist, whereas receding ones display an unbounded rate of energy dissipation at the contact line (see Remark \ref{rk:ED} and the recent discussion in \cite{KV}). This is the manifestation of the no-slip paradox \cite{HS,DD} in lubrication approximation. After the discovery of this paradox, a number of enrichments of the basic model have been introduced, all of them accounting for ``microscopic'' physics (we refer to the reviews \cite{ODB, DG, Bonn, A} and to \cite{DG2} for discussions):  among them slip conditions, shear-thinning rheologies, and short-range repulsive intermolecular potentials of the form $P(H)= O(H^{1-m})$ as $H\to 0^+$, $m>1$, which penalize closeness between the solid and the liquid-air interface, and $P(H)\to 0$ as $H\to +\infty$ (thus neglecting gravity; see \cite{BHP} for a discussion of its effect). Our focus is on the latter model.

\medskip

The idea that a liquid film can spread because of a gradient of the disjoining pressure $-P'$ dates back to \cite{Starov, DG-CR, HDG}; in his review \cite[Section IV.C.3]{DG} de Gennes presents a heuristic analysis of advancing fronts for van der Waals potentials.
In fact, repulsive potentials with $m\ge 3$ have been extensively used in numerical simulations and asymptotic studies, mainly in relation to dewetting phenomena (see e.g. \cite{BGW,Getal,ORS}, the discussion in \cite{DG2} and the review \cite{W}). On the other hand, a rigorous study of fronts in the case of mildly repulsive potentials ($m\in (1,3)$) has not yet been performed to our knowledge. Their formal and numerical analysis has been recently carried out in \cite{DG2}, supporting the existence of generic families of fronts and thus the fact that such potentials may stand as an alternative resolution to the no-slip paradox.

\medskip

The goal of this manuscript is to start giving rigorous bases to the aforementioned formal analysis. Our focus is on a class of advancing fronts which exhibit a logarithmically corrected linear behavior for large $y$:
\begin{equation}\label{tanner}
\lim_{y\to +\infty}\frac{H_y^3(y)}{\ln y}= 3U, \qquad U>0
\end{equation}
(the limiting value may be easily inferred from a leading-order expansion of \eqref{TW}). These {\it linear-log fronts} are known to be the appropriate ones for connecting to the liquid bulk, allowing to develop matched asymptotic studies for spreading droplets \cite{Voinov1976,Greenspan,Hoc1,Cox,Ehrhard,Haley,Hoc2,BDDG,ES,Eggers2005,PE,CG}, parts of which were made rigourous in \cite{GO3,GGO,DelM}. In \cite{DG2} it has been conjectured that for any $U>0$ there exists a one-parameter family of such fronts; in addition, a criterion has been proposed which is expected to select a unique linear-log front. The criterion is based on thermodynamically consistent contact-line conditions modeling friction {\it at} the contact line, in the spirit of \cite{RE1,RE2,RHE} (see also \cite{CG,GGP}). For travelling wave solutions, it reads as
\begin{equation}\label{clc}
\lim_{y\to 0^+}\left(\tfrac{1}{2}H_y^2-P(H(y))\right) = \mmu U-S=:\Theta_0,
\end{equation}
\end{subequations}
where $\mmu\ge 0$ is proportional to contact-line friction and $S$ is the (non-dimensional) {\it spreading coefficient}. The latter one is determined by the values of the three surface tension coefficients of the air-liquid-solid system:
$S<0$, $S=0$ and $S>0$ correspond to the cases of {\it partial wetting}, {\it complete wetting} and {\it dry (complete) wetting}, respectively (see \cite{DG}; see also \cite{DurG} for a discussion of the statics in these different cases). Note that in the more classical case of null contact-line friction, $\mmu=0$, \eqref{clc} coincides with the standard fixed contact-angle condition if $P\equiv 0$, and in that case necessarily $-S=\Theta_0\ge 0$. Hence the case $S\le 0$ is the relevant one for the contact-line models which are commonly used in the analysis of the thin-film equation with slippage.

\medskip

Our main result confirms the above-mentioned formal analysis under assumptions that include in particular the case of a repulsive potential ($P'<0$ in $\R_+$) in the partial wetting regime ($S<0$). By a solution to \eqref{def-sol} we mean a function $H\in C^3(\R_+)\cap C^0(\bar R_+)$ which satisfies \eqref{TW} pointwise and such that \eqref{TW-bc0}-\eqref{clc} hold.
\begin{theorem}\label{T1}
Let $U>0$, $\Theta_0>0$, and $P\in C^2(\R_+)$ such that
\begin{subequations}\label{hpp}
\begin{equation}\label{hp:p0}
P(H)= \tfrac{A}{m-1} H^{1-m}(1+o(1)) \ \mbox{ as $h\to 0^+$ for some $m>1$, $A>0$,}
\end{equation}
\begin{equation}\label{P-large}
P(+\infty)=P'(+\infty)=0, \quad H^{p+1}P''(H) =O(1) \ \mbox{\ as $H\to +\infty\ $ for some $p>1$,}
\end{equation}
\begin{equation}\label{hpp-u3}
\inf_{H>0}P(H)+\Theta_0 >0.
\end{equation}
\end{subequations}
Then there exists a unique solution to \eqref{def-sol} such that $H_y>0$ in $\R_+$. Furthermore
\begin{equation}\label{q0}
H(y)=\alpha y^\frac{2}{m+1} (1+o(1)), \quad H_y(y) = \tfrac{2\alpha}{m+1} y^\frac{1-m}{m+1}(1+o(1)) \quad\mbox{as $y\to 0$},\quad \alpha=\left(\tfrac{A(m+1)^2}{2(m-1)}\right)^\frac{1}{m+1},
\end{equation}
and $B>0$ exists such that
\begin{equation}\label{qinfty}
\tfrac{1}{3U} H_y^3(y(H)) = L(H) + \ln B +O(\tfrac{\ln \ln H}{\ln H})\quad\mbox{as $H\to +\infty$,} \ \quad L(H):= \ln H -\tfrac13 \ln \ln H.
\end{equation}
\end{theorem}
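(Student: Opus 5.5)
Since we look for monotone fronts ($H_y>0$), the natural first step is to use $H$ as the independent variable. We write $H_y^3 = 3U\,v(H)$, or, more conveniently, set $\phi(H):=\tfrac12 H_y^2$ regarded as a function of $H$. Then $H_{yy}=\phi'(H)$ and $\partial_y = H_y\partial_H=\sqrt{2\phi}\,\partial_H$, so \eqref{TW} becomes the second order autonomous-free ODE
\begin{equation*}
H^2\sqrt{2\phi(H)}\,\bigl(\phi''(H)-P''(H)\bigr)=-U,\qquad H>0 .
\end{equation*}
Setting $\psi:=\phi-P$, this reads
\begin{equation*}
\psi''=-\frac{U}{H^2\sqrt{2(\psi+P)}} .
\end{equation*}
The boundary conditions become $\psi(0^+)=\Theta_0$ by \eqref{clc}, $\psi+P>0$ on $\R_+$, and the growth condition \eqref{tanner} at infinity. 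Positivity of $\Theta_0+\inf P$ in \eqref{hpp-u3} is exactly what keeps $\psi+P$ away from zero as long as $\psi$ stays near $\Theta_0$. Since $\psi$ is concave, however, it decreases once its slope is negative, so the real constraint is global.

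The solution set is a two-parameter shooting problem from $H=0$: $\psi(0)=\Theta_0$ together with a slope parameter. Near $0$, $P\sim \tfrac{A}{m-1}H^{1-m}$ dominates, so $\psi''\approx -U\sqrt{(m-1)/(2A)}\,H^{(m-3)/2-2}$, with exponent $(m-7)/2$. The plan is as follows.

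(i) For $m$ near $1$ the quantity $\psi'$ may be integrable at $0$; for general $m$ it is not. We therefore write $\psi=\Theta_0+\lambda H+R(H)$, where $R$ is the particular singular part determined iteratively by the integral equation
\[
R(H)=-\int_0^H\!\!\int_s^{\cdot}\cdots ,
\]
suitably adapted with an inner Taylor remainder, plus the free parameter $\lambda$. We then solve it by a contraction on $(0,\delta)$, using that $P$ dominates, so $\sqrt{2(\psi+P)}\approx\sqrt{2P}$ and the nonlinearity is Lipschitz with small constant. If $R$ is not integrable against $H$ (when $m\ge 5$), we will instead use the energy-type variable and prescribe only $\lim(\phi-P)=\Theta_0$, which still leaves one free parameter $\lambda\in\R$.

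(ii) \textbf{Monotonicity in $\lambda$.} By the comparison principle for $\psi''=F(H,\psi)$ with $F$ increasing in $\psi$, solutions are ordered in $\lambda$ and their differences grow.

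(iii) \textbf{Shooting.} For $\lambda$ large, $\psi$ grows linearly, $\phi\sim\lambda H$, and this gives $H_y^2\sim H$, which is too fast (not log). For $\lambda$ very negative, $\psi+P$ hits zero at finite $H$, so $H_y=0$, which is excluded. The intermediate value $\lambda^*=\inf\{\lambda:\ \psi_\lambda+P>0\ \text{globally and }\psi_\lambda'\to 0\}$ gives the separatrix, on which $\psi'(+\infty)=0$. On this solution $\phi\to+\infty$ slowly with $\phi^{3/2}\sim\tfrac{3U}{2\sqrt2}\ln H$, which is \eqref{tanner}. Uniqueness follows from strict ordering: two solutions with $\psi'\to0$ would have a difference that is convex and increasing in absolute value, a contradiction.

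(iv) \textbf{Asymptotics.} Near $0$, $H_y^2\approx 2P$ integrates to \eqref{q0}, with $\alpha$ as stated. At infinity, set $v=H_y^3/(3U)$. Then $\psi''=-U/(H^2(3Uv)^{1/3})$, and $P''=O(H^{-p-1})$ with $p>1$ is integrable, so the $P$ terms contribute only $o$ errors. Integrating twice from infinity gives $v=\ln H-\tfrac13\ln\ln H+\ln B+O(\ln\ln H/\ln H)$ by the standard iteration used for slip models. Finally, $H_y>0$ is ensured by $\phi>0$, and reversing the change of variables, $y(H)=\int_0^H d s/\sqrt{2\phi}$ is finite at $0$ and infinite at infinity, so $H$ is defined on all of $\R_+$.

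The main obstacle is step (i)/(iii): constructing the local family near the singular contact line uniformly for all $m>1$ with the condition \eqref{clc}, and proving the dichotomy in the shooting argument, namely that for $\lambda$ not at $\lambda^*$ the solution either touches zero or grows linearly. Assumption \eqref{hpp-u3} is the key input for the latter.
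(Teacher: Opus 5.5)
\textbf{Verdict: genuine gap.} Your shooting strategy differs from the paper's, which never poses an initial value problem at the contact line. The paper builds $w(\eps)=\Theta_0$ and $w'(\eps^{-1})=0$ into the Green's-function operator \eqref{sol-op}. It gets a Schauder fixed point in the set $\mathcal S_\eps$ of two-sided bounds $M^{-1}f\le P+w\le Mf$; the lower bound is where \eqref{hpp-u3} is used. It then lets $\eps\to0$ by dominated convergence, and the same bounds feed the asymptotics. Your strategy can probably be made to work, but as written it does not prove existence. The load-bearing steps are exactly the ones you defer as ``the main obstacle'', and the computation behind (i) is wrong.

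\textbf{Step (i): the local family at $H=0$.} Since $P^{-1/2}\sim\sqrt{(m-1)/A}\,H^{(m-1)/2}$, one has $\psi''\approx-U\sqrt{(m-1)/(2A)}\,H^{(m-5)/2}$, not $H^{(m-7)/2}$. Consequently:
\begin{itemize}
\item $\psi'$ is integrable at $0$ for every $m>1$, which is why $\psi(0^+)=\Theta_0$ makes sense;
\item $\psi'(0^+)$ is finite only for $m>3$ (cf.\ \eqref{w'0});
\item nothing special happens at $m\ge5$, where $\psi''$ is even bounded.
\end{itemize}
So for $1<m\le3$ there is no slope at $0$ to shoot with. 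You never define $\lambda$, nor show that the solutions with $\psi(0^+)=\Theta_0$ form an ordered one-parameter family depending continuously on $\lambda$ up to the singular endpoint.

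One way to do this is to solve $\psi(H)=\Theta_0+\lambda H+\int_0^\delta\frac{U(\eta\wedge H)}{\eta^2\sqrt{2(P+\psi)}}\,\dd\eta$ on $(0,\delta)$, i.e.\ $\psi(0)=\Theta_0$, $\psi'(\delta)=\lambda$. This is a contraction, with Lipschitz constant $\lesssim\int_0^\delta\eta^{-1}\eta^{3(m-1)/2}\,\dd\eta\lesssim\delta^{3(m-1)/2}$, as long as $P+\psi\gtrsim P$ on $(0,\delta)$. But that condition ties $\delta$ to $|\lambda|$, so covering all $\lambda$ and proving continuity in $\lambda$ still requires an argument.

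\textbf{Step (iii): the separatrix.} Your $\lambda^*$ is defined as the infimum of the very set whose nonemptiness is the claim; by uniqueness that set has at most one point. What you need is the following. Let $\Lambda_+=\{\lambda:\ \psi_\lambda\text{ is global and }\psi_\lambda'(+\infty)>0\}$ and $\Lambda_-=\{\lambda:\ P+\psi_\lambda\text{ vanishes at a finite }H\}$. These must be nonempty and open, with $\Lambda_+$ upward closed by your ordering, so that $\lambda^*:=\inf\Lambda_+$ lies in neither set. Openness is exactly the continuous dependence you have not proved. For $\Lambda_+$ it also needs control on $[H_1,+\infty)$, e.g.\ a positive lower bound on $P+\psi$ together with $\int_{H_1}^{\infty}\eta^{-2}\,\dd\eta=H_1^{-1}$ small.

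\textbf{Role of \eqref{hpp-u3}.} The ``dichotomy'' you flag is free and does not use \eqref{hpp-u3}. Since $\psi''<0$, the limit $\psi'(+\infty)\in[-\infty,+\infty)$ exists, and $\psi'(+\infty)<0$ forces $P+\psi$ to vanish because $P$ is bounded. In your framework \eqref{hpp-u3} is useful elsewhere. Wherever $\psi'\ge0$ one has $\psi\ge\Theta_0$, since $\psi'$ is decreasing, hence $P+\psi\ge\inf P+\Theta_0>0$. This gives global existence for large $\lambda$, and a uniform lower bound on the separatrix, where $\psi'>0$.

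\textbf{Asymptotics and uniqueness.} The logarithmic growth on the separatrix is asserted, not derived. The GGO-type iteration needs $P+\psi\lesssim\ln^{2/3}H$ and $(P+\psi)'>0$ for large $H$. The paper gets these from $\mathcal S_\eps$ and a lower bound on $w'$; you would have to bootstrap them from $\psi'(H)=\int_H^{\infty}\frac{U}{\eta^2\sqrt{2(P+\psi)}}\,\dd\eta$. Finally, uniqueness among solutions of \eqref{def-sol} requires showing that \eqref{tanner} forces $\psi'(+\infty)=0$. This is easy, since $\psi'$ is monotone and $\psi=o(H)$.
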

Note from \eqref{q0} that the linear-log fronts in Theorem \ref{T1} have $H_y(0)=+\infty$, i.e., a microscopic contact angle equal to $\frac{\pi}{2}$ at the contact line. For a discussion on the admissibility of this feature in the framework of lubrication approximation we refer to \cite[Remark 1.2]{DG2}.

\begin{remark}\label{rk:ED}
For the thin-film equation, two fundamental quantities are the energy $E$ and its rate of dissipation $D$ (see e.g. the discussion in \cite{DG2}). Near the contact line, the advancing fronts identified in Theorem \ref{T1} have finite rate of energy dissipation for all $m>1$, in the sense that
\begin{equation*}
D[H]:=\int_0^1 H^3\left(H_{yy}-P'(H)\right)_y^2 \dd y \stackrel{(\ref{TW})}= \int_0^1 \frac{U^2}{H(y)}\dd y \stackrel{(\ref{q0})}<+\infty,
\end{equation*}
and finite energy if $m<3$, in the sense that
$$
E[H]:=\int_0^1 \left(\tfrac12 H_y^2+P(H)-S \right) \dd y <+\infty \ \stackrel{(\ref{q0})}\iff \ m<3
$$
(note that both $H_y^2$ and $P(H)$ scale like $y^\frac{2(1-m)}{m+1}$).
\end{remark}

\begin{remark}\label{rk:rep}
If $P>0$ in $\R_+$ then Theorem \ref{T1} holds for all $\Theta_0> 0$, hence in particular for all $S< 0$ (regardless of the values of $\mmu\ge 0$ and $U>0$ in \eqref{clc}). Since $P'<0$ (together with \eqref{hpp}) implies $P>0$, Theorem \ref{T1} fully covers the partial wetting case with a purely repulsive potential. It also partially covers the case of long-range attractive potentials (i.e. $P'(H)>0$ for $H\gg 1$), but then \eqref{hpp-u3} implies a non-generic upper bound on $S$ (for instance, $S< \inf P<0$ if $\mmu=0$).
\end{remark}

In view of the two remarks above, Theorem \ref{T1} confirms that mildly repulsive potentials ($1<m<3$) effectively resolve the no-slip paradox. It also confirms that \eqref{clc} acts as a selection criterion for fronts at the contact line, as much as the microscopic contact angle does in the case of slippage models \cite{CG0}.

\begin{remark}
As already observed in \cite{GGO,DG2,GWis}, \eqref{qinfty} sets the length-scale in the Cox-Hocking-Voinov relation \cite{Voinov1976,tanner,Cox,Hoc1,Hoc2} between speed and macroscopic contact angle (see the discussion in \cite{ES}). Indeed, after simple computations it translates back into
\begin{equation}\label{CHV}
H_y^3(y)= 3U\ln\left((3U)^{1/3}By\right) +o(1) \quad\mbox{as $y\to +\infty$}.
\end{equation}
Note that $B$ may depend on $\Theta_0$ (and in fact it appears to do it, see Figure \ref{fig} below).
\end{remark}

For the proof of Theorem \ref{T1} we capitalize, as customary in this matter, on translation invariance of \eqref{TW} in order to reduce the order of the ODE: letting
\begin{equation}
\label{sost}
\psi(H)=\tfrac12 H^2_y(y(H)), \qquad w(H)=\psi(H)-P(H),
\end{equation}
\eqref{TW} and \eqref{clc} read respectively as
\begin{subequations}\label{TW-in2}
\begin{equation}
\label{TW-in2-eq}
w''(H)=-\frac{U}{H^2 \sqrt{2(P(H)+w(H))}} \quad\mbox{for $H\in \R_+$}, \qquad w(0)= \Theta_0.
\end{equation}
It will turn out to be sufficient to relax \eqref{tanner} to $H_{yy}(+\infty)=0$, which under \eqref{sost} turns into $\psi'(+\infty)=0$, which in view of \eqref{P-large} means that
\begin{equation}\label{TW-in2-far}
w'(+\infty)=0.
\end{equation}
\end{subequations}
In Section 2 we use a fixed point argument to show that \eqref{TW-in2} admits a unique solution, and in Section 3 we translate this finding back to $H(y)$. The main point of this procedure is to show that
\begin{equation}\label{f-formal}
C^{-1}f(H) \le \psi (H)\le C f(H), \qquad  f(H)=\left\{\begin{array}{ll}
P(H) +\Theta_0 & \mbox{for $H\ll 1$}
\\[1ex]
\ln^{2/3}(e+H) & \mbox{for $H\gg 1$}
\end{array}\right.
\end{equation}
for some $C>1$ (see \eqref{def-S}-\eqref{def-f} below), consistently with the two regimes in \eqref{q0} and \eqref{qinfty}. Assumption \eqref{hpp-u3} is crucial to obtain the lower bound. The bounds and the fixed point argument are based on the representation formula
\begin{equation}\label{repr-formal}
\psi(H)= P(H)+ \Theta_0+ \int_0^{+\infty} \frac{U\min\{\eta,H\}}{\eta^2\sqrt{2\psi(\eta)}}\dd \eta.
\end{equation}
Once the solution is obtained, the bounds will a-posteriori be upgraded to the more precise asymptotics \eqref{q0} and \eqref{qinfty}.
In fact, \eqref{qinfty} will follow from an application of earlier results in \cite{GGO}.

\noindent \begin{minipage}[t]{1\textwidth}
\centering\raisebox{\dimexpr \topskip-\height}{
\includegraphics[width=0.4\textwidth]{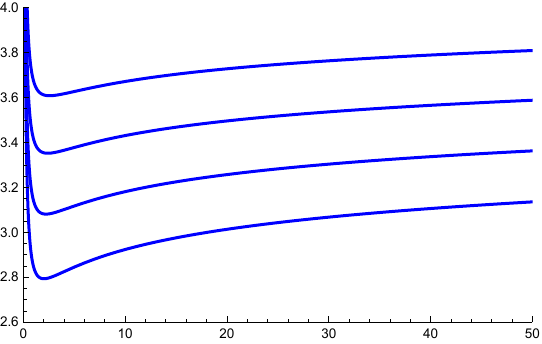}
\qquad\qquad  \includegraphics[width=0.4\textwidth]{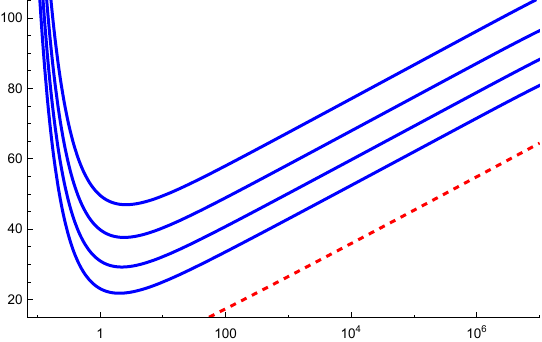}
  }
  \captionof{figure}{{\footnotesize Approximate numerical solutions to \eqref{def-sol} with $P(H)=H^{-1}$ ($m=2$, $A=1$) and $U=\sqrt{2}$ for $\Theta_0=1,2,3,4$ (bottom to top). On the left, plots of $H_y$ as a function of $H$; on the right, log-linear plots of $H_y^3$ as a function of $H$ (solid) and the asymptotic function $3U L(H)$ (dashed, cf. \eqref{qinfty}).}}
  \label{fig}
\end{minipage}

\medskip

Figure \ref{fig} reports approximate numerical solutions to \eqref{def-sol} with $P(H)=H^{-1}$ and $U=\sqrt{2}$ for different values of $\Theta_0> 0$. They are obtained by solving the equation in \eqref{TW-in2-eq} in $(\eps,\eps^{-1})$ with $w(\eps)=\Theta_0$, $|w'(\eps^{-1})|<\eps$, and $\eps=10^{-8}$. It is apparent that $P(H)$ generates a boundary layer near $H=0$ and that $\Theta_0$ macroscopically acts as a translation of $H_y^3(y(H))$, whence determining the constant $B$ in \eqref{qinfty} and \eqref{CHV}.

\medskip

Being limited to $\Theta_0> 0$ by \eqref{P-large} and \eqref{hpp-u3}, Theorem \ref{T1} leaves the case $S\ge 0$ generically open (it only covers $S<\mmu U$ for purely repulsive potentials). On the other hand, the numerical evidences in \cite{DG2}, as well as of course the heuristics in \cite{DG}, suggest that for negative values $\Theta_0$ with $|\Theta_0|$ sufficiently large a {\it precursor region} appears before the front reaches its ``macroscopic'' linear-log profile. This points to the presence, if $\Theta_0\le 0$, of a third, intermediate regime which modifies the bounds in \eqref{f-formal}, and other tools besides the representation formula \eqref{repr-formal} might be necessary to cover this case.

In addition to the one we just discussed, the analysis of contact-line motion for the thin-film equation with null slippage and (mildly) repulsive potentials leaves quite a few other relevant questions open, for which we refer to \cite[Section 4]{DG2}. Here we just mention global well-posedness and stability for the full evolution problem with initial data close to the fronts in Theorem \ref{T1}, in the spirit of \cite{BHL,GGKO,K2}.

\section{Well-posedness for \eqref{TW-in2}}

By a solution to \eqref{TW-in2} we mean a function $w\in C^2(\R_+)\cap C^0(\overline{\R}_+)$ such that $P+w>0$ in $\R_+$ and \eqref{TW-in2} hold.  Theorem \ref{T1} will be a consequence of the following two results, which are in fact more general in terms of regularity and behaviour of $P$.

\begin{theorem}\label{T-uniq}
Let $U>0$ and $P\in C(\R_+)$. For any $\Theta_0\in \R$ there exists at most one solution $w$ to \eqref{TW-in2}.
\end{theorem}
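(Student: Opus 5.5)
Let $w_1,w_2$ be two solutions of \eqref{TW-in2} with the same $\Theta_0$, and set $\psi_i=P+w_i>0$ and $v=w_1-w_2$. The plan is a comparison (maximum-principle) argument based on the monotonicity of the nonlinearity. Write $F(H,w)=-U H^{-2}(2(P(H)+w))^{-1/2}$. Then $F$ is strictly increasing in $w$, and $v$ satisfies
$$v''=F(H,w_1)-F(H,w_2),$$
so $v''$ has the same sign as $v$ wherever $v\neq0$. The boundary data are $v(0)=0$ and $v'(+\infty)=0$; note that $v'(+\infty)=\lim_{H\to\infty}v'(H)$ exists as a difference of two existing limits. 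Only continuity of $P$ is used, through $\psi_i>0$ and the equation.

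Suppose, for contradiction, that $v\not\equiv0$. By symmetry we may assume $v(H_1)>0$ at some $H_1>0$. On the connected component $(a,b)\ni H_1$ of $\{v>0\}$ the function $v$ is strictly convex. Since $v(0)=0$ and $v$ is continuous on $[0,\infty)$, we have $a\ge0$ and $v(a)=0$. Convexity and $v(a)=0<v(H_1)$ give a point $c\in(a,H_1)$ with $v'(c)>0$. Because $v$ is strictly convex on $(a,b)$, $v'$ is increasing there, so $v'>v'(c)>0$ on $[c,b)$ and $v$ is increasing on $[c,b)$. Hence $v$ cannot return to zero, so $b=+\infty$. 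On $(c,\infty)$ we then have $v'(H)\ge v'(c)>0$ and $v'$ nondecreasing, so $\lim v'\ge v'(c)>0$. This contradicts $v'(+\infty)=0$. The case $v(H_1)<0$ is identical with inequalities reversed, since $v$ is concave on components of $\{v<0\}$. Therefore $v\equiv0$.

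The main point needing care is the behaviour at the endpoints. At $H=0$ we need $v(a)=0$ and the existence of $c$ with $v'(c)>0$ even though $w_i'$ might blow up at $0$. Both follow from the mean value theorem on $(a,H_1)$ using only the continuity of $v$ at $0$. At infinity we only use that $v'$ is monotone on a half-line together with the limit condition \eqref{TW-in2-far}. No integrability of the right-hand side near $0$ is needed, so $P\in C(\R_+)$ suffices.
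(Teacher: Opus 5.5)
Your proof is correct and follows essentially the same route as the paper. Monotonicity of the nonlinearity gives $vv''\ge 0$, then $v(0)=0$ and the mean value theorem produce a point of positive slope, and convexity propagates this to a contradiction with $v'(+\infty)=0$. The only cosmetic difference is that the paper works with $(v^2)''=2(v')^2+2vv''\ge 0$ on all of $\R_+$, which avoids your restriction to a connected component of $\{v>0\}$.
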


\begin{theorem}\label{T-ex}
Let $U>0$, $\Theta_0\in \R$, and $P\in C(\R_+)$ such that \eqref{hp:p0} and \eqref{hpp-u3} hold, together with
\begin{equation}\label{hpp-ufar}
\limsup_{H\to +\infty}|P(H)|<+\infty.
\end{equation}
Then there exists a solution to \eqref{TW-in2}. Furthermore
\begin{equation}
\label{P+w}
\inf\limits_{\R_+}(P+w)>0, \qquad \limsup_{H\to +\infty} \frac{P(H)+w(H)}{\ln^{2/3} H}<+\infty,
\end{equation}
and
\begin{equation}\label{w'0}
w'(H)=\left\{\begin{array}{ll}  \tfrac{2U}{3-m}\left(\tfrac{m-1}{2A}\right)^{1/2} H^\frac{m-3}{2}(1+o(1)) & \mbox{if \ $m<3$}, \\ U\left(\tfrac{1}{A}\right)^{1/2} (\ln H)(1+o(1)) & \mbox{if \ $m=3$}, \\ O(1) & \mbox{if \ $m>3$}, \end{array}\right.
\quad \mbox{as \ $H\to 0$}.
\end{equation}
If in addition $P\in C^1(\R_+)$ and $P'(H)=O(H^{-p})$ as $H\to +\infty$ for some $p>1$, then
\begin{equation}
\label{add2}
\mbox{$(P+w)'>0$ \ for all $H$ sufficiently large.}
\end{equation}
\end{theorem}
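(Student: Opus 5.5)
The plan is to recast \eqref{TW-in2} as the fixed-point problem \eqref{repr-formal} for $\psi:=P+w$ and to solve it in an order interval of the form \eqref{f-formal}. Integrating $w''=-U/(H^2\sqrt{2\psi})$ with the boundary conditions $w'(+\infty)=0$ and $w(0)=\Theta_0$ gives $w'(H)=\int_H^{+\infty}U\eta^{-2}(2\psi(\eta))^{-1/2}\,\dd\eta$. Hence any continuous positive $\psi$ with $\psi=T\psi$, where
\begin{equation*}
T\psi(H):=P(H)+\Theta_0+\int_0^{+\infty}\frac{U\min\{\eta,H\}}{\eta^2\sqrt{2\psi(\eta)}}\,\dd\eta ,
\end{equation*}
yields a solution $w=\psi-P$. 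This requires the integral to converge, and it will, thanks to the growth of $\psi$ at $0$ and at $\infty$. Conversely, differentiating twice recovers \eqref{TW-in2-eq}, so $w\in C^2(\R_+)$. I fix a continuous comparison function $f>0$ with $f=P+\Theta_0$ near $0$ and $f=\ln^{2/3}(e+H)$ for $H$ large. By \eqref{hp:p0}, \eqref{hpp-u3} and \eqref{hpp-ufar}, $f$ is comparable to $P+\Theta_0$ on $(0,1]$, and $P+\Theta_0$ is bounded above and below by positive constants on $[1,+\infty)$. I then set $K_{c,C}=\{\psi\in C(\R_+):\ cf\le\psi\le Cf\}$.

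The first key step is the invariance $T(K_{c,C})\subset K_{c,C}$ for suitable $c,C$. The map $T$ is order-reversing. For $\psi\ge cf$, the integral is bounded by $M c^{-1/2}f(H)$. Near $0$ this holds because $\psi\gtrsim\eta^{1-m}$ makes the integrand $\lesssim\eta^{(m-1)/2-1}$, which is integrable. For large $H$ it holds because $\int_1^H\frac{\dd\eta}{\eta\ln^{1/3}\eta}\sim\tfrac32\ln^{2/3}H$, and the tail $H\int_H^\infty$ is even smaller. Hence $T\psi\le(M_0+Mc^{-1/2})f$. For $\psi\le Cf$, we get $T\psi\ge P+\Theta_0\ge \delta:=\inf(P+\Theta_0)>0$ by \eqref{hpp-u3}; near $0$ also $T\psi\ge P+\Theta_0\approx f$, and for large $H$, $T\psi\ge m C^{-1/2}\ln^{2/3}H$. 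So we need $C\ge M_0+Mc^{-1/2}$ and $c\le\min\{c_0,\,mC^{-1/2}\}$. Taking $c=mC^{-1/2}$ reduces this to $C\ge M_0+M m^{-1/2}C^{1/4}$, which holds for $C$ large. This bookkeeping, together with controlling the transition region on compact sets via $\delta$, is the step I expect to be the main obstacle.

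Existence then follows from the Schauder–Tychonoff theorem in the Fréchet space $C(\R_+)$ with the topology of local uniform convergence. The set $K_{c,C}$ is convex and closed. $T$ is continuous on it by dominated convergence, using the majorant $U\min\{\eta,H\}\eta^{-2}(2cf)^{-1/2}$. The image $T(K_{c,C})$ is relatively compact, since $(T\psi-P)'=w'$ is bounded uniformly on compacts of $\R_+$ and Arzel\`a–Ascoli applies. This produces a fixed point; the first bound in \eqref{P+w} follows from $\psi\ge c f\ge c'>0$ and the second from $\psi\le Cf$. Uniqueness is not needed here, since it is Theorem \ref{T-uniq}.

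For the remaining asymptotics, I use $\psi=T\psi\in K_{c,C}$. As $H\to0$ this gives $\psi(\eta)=P(\eta)(1+o(1))=\tfrac{A}{m-1}\eta^{1-m}(1+o(1))$, since $\Theta_0$ and the integral term are $o(P)$. Therefore $w'(H)=\int_H^{+\infty}U\eta^{-2}(2\psi)^{-1/2}\dd\eta$ has integrand $U\big(\tfrac{m-1}{2A}\big)^{1/2}\eta^{(m-5)/2}(1+o(1))$ near $0$. Integrating yields \eqref{w'0}: a power $\tfrac{2}{3-m}H^{(m-3)/2}$ for $m<3$, $\ln(1/H)$ for $m=3$ (with the sign convention of \eqref{w'0}), and a bounded limit for $m>3$. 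Finally, for \eqref{add2} I use $\psi\le C\ln^{2/3}$ to get $w'(H)\ge H\cdot\tfrac{U}{\sqrt{2C}}\int_H^{2H}\eta^{-2}\ln^{-1/3}\eta\,\dd\eta\gtrsim H^{-1}\ln^{-1/3}H$. This dominates $|P'(H)|=O(H^{-p})$ with $p>1$, so $(P+w)'=P'+w'>0$ for $H$ large.
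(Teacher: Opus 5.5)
Your proof is correct. Its analytic core is the paper's: the same representation formula, the same comparison function $f$ (equal to $P+\Theta_0$ near $0$ and to $\ln^{2/3}(e+H)$ at infinity), and the same order-interval invariant set. You also use \eqref{hpp-u3} where the paper does, to get the lower bound for $H$ large.

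The genuine difference is how you handle the non-compactness of $\R_+$.

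\textbf{The paper's route.} It truncates to $I_\eps=(\eps,\eps^{-1})$ with $w(\eps)=\Theta_0$ and $w'(\eps^{-1})=0$. It then applies the classical Schauder theorem in the Banach space $C(\bar I_\eps)$, where compactness is plain Arzel\`a--Ascoli on a compact interval. Finally it lets $\eps\to0$ by a diagonal argument and dominated convergence. This costs the shifted kernel $\eta\wedge H-\eps$, the restriction $4\eps<H_0$ in the lower bound, and some care that $M$ does not depend on $\eps$. In exchange, the scheme uses only the Banach-space fixed-point theorem and mirrors the numerics behind Figure 1.

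\textbf{Your route.} You keep the limiting kernel $\min\{\eta,H\}$ throughout and apply Schauder--Tychonoff in the Fr\'echet space $C(\R_+)$ with local uniform convergence. The single integrable majorant $\min\{\eta,H\}\eta^{-2}(cf)^{-1/2}$ then gives well-definedness, continuity of $T$, and local equicontinuity of $T(K_{c,C})$, with no approximation step. The price is a less elementary fixed-point theorem: you need completeness so that the closed convex hull of $\overline{T(K_{c,C})}$ is compact.

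\textbf{Minor points.}
\begin{itemize}
\item Your asymmetric choice $c\sim C^{-1/2}$ is equivalent to the paper's symmetric $M^{\pm1}$.
\item Restricting to $[H,2H]$ for \eqref{add2} is simpler than the paper's integration by parts. However, your displayed inequality has a stray factor $H$. It should read $w'(H)\ge \frac{U}{\sqrt{2C}}\int_H^{2H}\eta^{-2}\ln^{-1/3}(e+\eta)\,\dd\eta\ge \frac{U}{\sqrt{2C}}\,\frac{H}{(2H)^2\ln^{1/3}(e+2H)}$.
\item You are right that for $m=3$ the leading term in \eqref{w'0} is $\ln(1/H)$, since $w'\ge 0$.
\item Avoid reusing the letter $m$ for a generic constant, since it clashes with the exponent in \eqref{hp:p0}.
\end{itemize}
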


\begin{proof}[Proof of Theorem \ref{T-uniq}]
Assume that $w_1,w_2$ are two solutions to \eqref{TW-in2} in $\R_+$ and let $w=w_1-w_2$. Then $w$ solves
$$
w'' = -\frac{U}{\sqrt{2} H^2} \left(\frac{1}{\sqrt{P+w_1}} - \frac{1}{\sqrt{P+w_2}}\right) \quad\mbox{with $w(0)=w'(+\infty)=0$.}
$$
Since $w\mapsto \frac{1}{\sqrt{P+w}}$ is monotone decreasing, we have
\begin{equation} \label{p1}
ww''\ge 0\quad\mbox{in $\R_+$}.
\end{equation}
Thus $(w^2)'' = 2(w')^2+2ww'' \ge 0$. Assume by contradiction that $w^2(H_0)>0$ at some $H_0\in \R_+$. Since $w^2(0)=0$, there exists $H_1\in (0,H_0)$ such that $(w^2)'(H_1)>0$. Then $(w^2)''\ge 0$ implies that $(w^2)'(H)\ge (w^2)'(H_1)>0$ for all $H\ge H_1$, hence $w^2>0$ and $(w^2)'>0$ for all $H\ge H_0$. Assume without losing generality that $w>0$ in $[H_0,+\infty)$. Then \eqref{p1} entails $w''\ge 0$ in $[H_0,+\infty)$, and the inequality $0<(w^2)'=2ww'$ implies that also $w'>0$ in $[H_0,+\infty)$. Therefore $w'(+\infty)>0$, in contradiction with the assumption $w'(+\infty)=0$.
\end{proof}

\begin{proof}[Proof of Theorem \ref{T-ex}] We let $u=U/\sqrt 2$ for notational convenience. We wish to apply a Schauder fixed point argument in $C^0$ spaces. Since we expect that $w'(0) = +\infty$ for $m<3$ (cf. \eqref{w'0}) and that  $w(+\infty)=+\infty$ (cf. \eqref{qinfty}), we introduce approximating problems defined on $(\eps,\eps^{-1})$, $\eps\in (0,1)$:
\begin{equation}\label{Pe}
\left\{
\begin{array}{l}
\displaystyle w''(H)=-\frac{u}{H^2 \sqrt{P(H)+w(H)}} \quad\mbox{for $H\in I_\eps:=(\eps,\eps^{-1})$},
\\[3ex]
w(\eps)= \Theta_0, \ w'(\eps^{-1})=0.
\end{array}
\right.
\end{equation}
We notice that any $w\in C^2(\overline{I}_\eps)$ with $w(\eps)=\Theta_0$ and $w'(\eps^{-1})=0$ satisfies
\begin{align*}
w(H)-\Theta_0& = \int_\eps^H \int_{\eta}^{1/\eps} -w''(\eta')\dd \eta'\dd \eta = -\int_\eps^H \dd \eta' \int_\eps^{\eta'} \dd \eta\ w''(\eta') -\int_H^{1/\eps} \dd \eta' \int_\eps^{H} \dd \eta\ w''(\eta')
\\ & = - \int_\eps^{1/\eps} (\eta'\wedge H-\eps) w''(\eta')\dd \eta',
\end{align*}
where we let $a\wedge b=\min\{a,b\}$. Hence we consider the solution operator
\begin{equation}\label{sol-op}
\tilde w \mapsto T_\eps(\tilde w)=w=\Theta_0+v, \quad v(H):= \int_\eps^{1/\eps} \frac{u(\eta\wedge H-\eps)}{\eta^2\sqrt{P(\eta)+\tilde w(\eta)}}\dd \eta
\end{equation}
in the space
\begin{equation}\label{def-S}
\mathcal S_\eps:=\left\{ w\in C(\bar I_\eps): \ M^{-1} f(H)\le P+ w \le Mf(H) \right\},
\end{equation}
where $M>1$ will be chosen later and
\begin{equation}\label{def-f}
f(H):=\left\{\begin{array}{ll}
P(H) +\Theta_0 & \mbox{if $H\le H_0$,}
\\[1ex]
\ln^{2/3}(e+H) & \mbox{if $H> H_0$,}
\end{array}\right.
H_0=\sup\left\{H: \ P(\eta)+\Theta_0 >\ln^{2/3}(e+\eta) \ \ \forall \eta<H\right\}
\end{equation}
(note that $H_0\in (0,+\infty)$ in view of \eqref{hp:p0} and \eqref{hpp-ufar}). The choice of $f$ is motivated by the expected leading order behaviours of $H(y)$, hence of $w(H)$ (cf. \eqref{q0} and \eqref{qinfty}). By definition $P(H)+\Theta_0>0$ in $(0,H_0]$, hence by \eqref{hp:p0} and  \eqref{hpp-ufar} there exists $c_0, c_1\ge 1$ such that
\begin{equation}
\label{bounds-P}
c_0^{-1} H^{1-m} \le P(H)+\Theta_0\le c_0 H^{1-m}\quad\mbox{for $H\in (0,H_0)$},
\qquad P(H)+\Theta_0\le c_1 \quad\mbox{for $H>H_0$. }
\end{equation}
In particular $f$ has a positive minimum in $\R_+$, thus both $\sqrt{P+\tilde w}$ and the integral on the right-hand side of \eqref{sol-op} are well defined for all $\tilde w\in \mathcal S_\eps$. It is clear that $\mathcal S_\eps$ is a closed, bounded and convex subspace of $C^0(\overline{I}_\eps)$ for all $\eps\in (0,1)$.

\medskip

We will consider any $\eps>0$ such that $\eps<\frac{H_0}{4}\wedge \frac{1}{H_0}$, and we will choose $M>1$ sufficiently large. Throughout the rest of the proof, $C\ge 1$ denotes a generic constant which is independent of $M$ and $\eps$, and we write $a\lesssim b$, resp. $a\ll b$, whenever one such $C$ exists so that $a\le Cb$, resp. $Ca\le b$. Let $\tilde w\in \mathcal S_\eps$ and $v,w$ as defined in \eqref{sol-op}.

\medskip

{\underline{Step 1: $T_\eps(\mathcal S_\eps)\subset \mathcal S_\eps$}}. We will first prove the upper bound:
\begin{equation}\label{ub}
\exists M_1\gg 1: \  P+w\le Mf \quad\mbox{in $(\eps,\frac{1}{\eps})\ $ for all $M\ge M_1$}.
\end{equation}
We write
$$
\frac{v(H)}{u\sqrt M} \stackrel{(\ref{def-S})}\le \int_0^{H_0} \frac{\eta\wedge H}{\eta^2 \sqrt{P(\eta)+\Theta_0}}\dd\eta + \int_{H_0}^{+\infty}  \frac{\eta\wedge H}{\eta^2 \ln^{1/3}(e+\eta)}\dd \eta.
$$
For $H\in[\eps,H_0]$ we estimate
\begin{align*}
\frac{v(H)}{u\sqrt M} & \le \int_0^{H} \frac{1}{\eta \sqrt{P(\eta)+\Theta_0}}\dd\eta + \int_H^{H_0} \frac{H}{\eta^2 \sqrt{P(\eta)+\Theta_0}}\dd\eta + \int_{H_0}^{+\infty}  \frac{H}{\eta^2 \ln^{1/3}(e+\eta)}\dd \eta
\\ &
\stackrel{(\ref{bounds-P})}\lesssim
\int_0^{H} \eta^\frac{m-3}{2}\dd\eta + \int_H^{H_0} H \eta^\frac{m-5}{2}\dd\eta + \int_{H_0}^{+\infty}  \frac{H}{\eta^2 \ln^{1/3}(e+\eta)}\dd \eta.
\end{align*}
Since $H\le H_0$, we have
\begin{equation}\label{ghj}
\int_H^{H_0} H \eta^\frac{m-5}{2}\dd\eta
\lesssim
\left\{
\begin{array}{ll} H\left|H^\frac{m-3}{2}-H_0^\frac{m-3}{2}\right|\le H_0^\frac{m-1}{2} & \mbox{if $m>1$, \ $m\ne 3$}
\\
H\ln\frac{H_0}{H} \le \frac{H_0}{e} & \mbox{if $m=3$}
\end{array}
\right.
\end{equation}
and
$$
\int_0^{H} \eta^\frac{m-3}{2}\dd\eta\lesssim H^\frac{m-1}{2}\le H_0^\frac{m-1}{2}, \quad \int_{H_0}^{+\infty}  \frac{H}{\eta^2 \ln^{1/3}(e+\eta)}\dd \eta \le \int_{H_0}^{+\infty}  \frac{H}{\eta^2}\dd \eta \le \frac{H}{H_0} \le 1.
$$
Therefore $\frac{v(H)}{u\sqrt M}\lesssim 1$ for $H\le H_0$, which implies that
$$
P+w = P+\Theta_0+v \le P + \Theta_0 + C\sqrt M \quad\mbox{ in $(\eps,H_0)$.}
$$
Hence in $(\eps,H_0)$ the inequality $P+w\le Mf = M (P+\Theta_0)$ is implied by $(P+\Theta_0)(M-1) \ge C\sqrt M$, which holds true for $M$ sufficiently large in view of \eqref{bounds-P}. This proves the inequality in \eqref{ub} for $H\in (\eps,H_0)$. For $H\ge H_0$ we analogously estimate
\begin{align*}
\frac{v(H)}{u\sqrt M} & \le \int_0^{H_0} \frac{1}{\eta \sqrt{P(\eta)+\Theta_0}}\dd\eta + \int^H_{H_0} \frac{1}{\eta \ln^{1/3}(e+\eta)}\dd\eta + \int_{H}^{+\infty}  \frac{H}{\eta^2 \ln^{1/3}(e+\eta)}\dd \eta.
\end{align*}
For the middle integral, we note that $\frac{1}{\eta}=\frac{1}{e+\eta} + \frac{e}{\eta(e+\eta)}< \frac{1}{e+\eta} + \frac{e}{\eta^2}$, so that
$$
\int^H_{H_0} \frac{1}{\eta \ln^{1/3}(e+\eta)}\dd\eta \le \tfrac32\ln^{2/3}(e+H) +\frac{e}{H_0\ln^{1/3} (e+H_0)}.
$$
Using also (\ref{bounds-P})$_1$, we see that
\begin{align*}
\frac{v(H)}{u\sqrt M} & \lesssim H_0^\frac{m-1}{2} + \ln^{2/3}(e+H) +\frac{1}{H_0\ln^{1/3} (e+H_0)} + \frac{1}{\ln^{1/3} (e+H_0)} \lesssim 1+ \ln^{2/3}(e+H).
\end{align*}
This means that
$$
P+w \stackrel{(\ref{bounds-P})_2} \le c_1 + C\sqrt M (1+ \ln^{2/3}(e+H)) \lesssim \sqrt M (1+ \ln^{2/3}(e+H)) \quad\mbox{in $(H_0,\frac{1}{\eps})$.}
$$
Therefore for $H\in(H_0,\frac{1}{\eps})$ we have
\begin{align*}
P+w\le Mf(H) = M \ln^{2/3}(e+H) & \ \Longleftarrow \ C\sqrt M (1+ \ln^{2/3}(e+H)) \le M \ln^{2/3}(e+H)
\\ & \ \Longleftrightarrow \ C \le  (\sqrt M -C) \ln^{2/3}(e+H),
\end{align*}
and the latter holds true for all $H\ge H_0$ choosing $M$ sufficiently large. This completes the proof of \eqref{ub}.

\medskip

We now prove the lower bound, where the crucial assumption \eqref{hpp-u3} is used:
\begin{equation}\label{lb}
\exists M_2\gg 1: \  P+w\ge \tfrac1M f \ \mbox{ in $(\eps,\frac 1 \eps)\ $\quad  for all $M\ge M_2$}.
\end{equation}
For $H<H_0$, it suffices to note that by definition $v\ge 0$: $P+w=P+\Theta_0+v\ge P+\Theta_0=f$, hence the lower bound in fact holds for all $M\ge 1$. For $H>H_0$ we write
\begin{align*}
\frac{\sqrt M v(H)}{u} & \ge \int_\eps^{H_0} \frac{\eta-\eps}{\eta^2 \sqrt{P(\eta)+\Theta_0}}\dd\eta + \int^H_{H_0} \frac{\eta-\eps}{\eta^2 \ln^{1/3}(e+\eta)}\dd\eta + \int_{H}^{1/\eps}  \frac{H-\eps}{\eta^2 \ln^{1/3}(e+\eta)}\dd \eta.
\end{align*}
We note that $\frac{\eta-\eps}{\eta^2}>\frac{1}{2\eta}>\frac{1}{2(e+\eta)}$ if $\eta>2\eps$ (recall that $4\eps<H_0$). Using these inequalities in the first two integrals (and neglecting the third one, which is non-negative) we obtain
\begin{align*}
\frac{\sqrt M v(H)}{u} &  \stackrel{(\ref{bounds-P})} \gtrsim \int_{2\eps}^{H_0} \eta^\frac{m-3}{2}\dd\eta + \int^H_{H_0} \frac{1}{(e+\eta)\ln^{1/3}(e+\eta)}\dd\eta
\\ &
\gtrsim H_0^\frac{m-1}{2}-(2\eps)^{\frac{m-1}{2}} + \ln^{2/3}(e+H)-\ln^{2/3}(e+H_0)
\\ & \gtrsim H_0^\frac{m-1}{2} + \ln^{2/3}(e+H)-\ln^{2/3}(e+H_0),
\end{align*}
where in the last inequality we used that $4\eps<H_0$. Therefore the inequality in \eqref{lb} holds true in $(H_0,\frac1\eps)$ if
$$
P(H)+\Theta_0 + \frac 1 {C \sqrt M }\left(H_0^\frac{m-1}{2} + \ln^{2/3}(e+H) -\ln^{2/3}(e+H_0)\right) - \frac 1 M \ln^{2/3}(e+H) \ge 0
$$
for all $H\ge H_0$. We rewrite this inequality as
$$
\left[P(H)+\Theta_0 + \frac 1 {C \sqrt M } \left(H_0^\frac{m-1}{2} - \ln^{2/3}(e+H_0)\right)\right] +  \left(  \frac 1 {C \sqrt M }-\frac{1}{M} \right) \ln^{2/3}(e+H) \ge 0,
$$
from which we see that because of \eqref{hpp-u3} both summands are positive for $M$ sufficiently large. Thus \eqref{lb} holds.

\medskip

{\underline{Step 2: existence of $w_\eps$}}. Now that $M$ has been chosen once for all (independently of $\eps$), there is no further need to track it explicitly; hence $C$ will denote a generic constant greater than $1$ independent of $\eps$, and the notation $a\lesssim b$ will be used accordingly. We now show that $T_\eps(\mathcal S_\eps)$ is relatively compact in $\mathcal S_\eps$. Note that by definition
$$
0\le w'(H)= v'(H) = \int_H^{1/\eps} \frac{u}{\eta^2\sqrt{P(\eta)+\tilde w(\eta)}}\dd \eta \stackrel{(\ref{def-S})} \lesssim \int_H^{1/\eps} \frac{u}{\eta^2\sqrt{f(\eta)}}\dd \eta .
$$
Therefore, using \eqref{def-f},
\begin{subequations}\label{w'}
\begin{align}\label{w'1}
0\le w'(H) & \lesssim  \int_H^{1/\eps} \frac{1}{\eta^2\ln^{1/3}(e+\eta)}\dd \eta \le \frac 1 H \le \frac 1{H_0} \lesssim 1 \quad \mbox{for $H\in (H_0,\frac 1 \eps)$},
\\ \label{w'2}
0\le w'(H) & \stackrel{(\ref{bounds-P})}\lesssim \int_H^{H_0} \eta^\frac{m-5}{2}\dd \eta  + \frac 1 {H_0}
\stackrel{(\ref{ghj})}\lesssim \max\{\tfrac{1}{H_0},\ln\tfrac{H_0}{H}, |H^\frac{m-3}{2}-H_0^\frac{m-3}{2}|\}  \quad \mbox{for $H\in (\eps,H_0)$},
\end{align}
\end{subequations}
which imply relative compactness in $\mathcal S_\eps$ by Arzel\`a–Ascoli theorem. By the Schauder's Theorem, there exists a fixed point $w_\eps \in \mathcal S_\eps$:
\begin{equation}\label{fpe}
w_\eps(H) = \Theta_0+  \int_0^{+\infty} \chi_{I_\eps}(\eta)\frac{u(\eta\wedge H-\eps)}{\eta^2\sqrt{P(\eta)+w_\eps(\eta)}} \dd \eta \qquad \mbox{for all $H\in I_\eps$},
\end{equation}
where $\chi_I$ denotes the characteristic function of $I$.

\medskip

{\underline{Step 3: the limit $\eps\to 0$}}. It follows from \eqref{def-S} and \eqref{w'} that
$\|w_\eps\|_{C^1(I_\eps)} \le C_\eps$. Therefore, by a standard diagonal procedure and  Arzel\`a–Ascoli theorem, a subsequence (not relabeled) exists such that $w_\eps \to w \in C(\R_+)$ locally uniformly, and \eqref{def-S} implies that
\begin{equation}\label{qwe}
f(H)\lesssim P(H)+w(H) \lesssim f(H) \quad\mbox{for all $H\in \R_+$}
\end{equation}
(recall that $M$ has been chosen once for all, independently of $\eps$). We now pass to the limit $\eps\to 0$ in \eqref{fpe}. For any fixed $H\in \R_+$ we have
$$
\chi_{I_\eps}(\eta) \frac{(\eta\wedge H-\eps)}{\eta^2\sqrt{P(\eta)+w(\eta)}} \stackrel{(\ref{qwe})}\lesssim \frac{\eta\wedge H}{\eta^2\sqrt{f(\eta)}}
\stackrel{(\ref{def-f})}\lesssim \left\{
\begin{array}{ll}
\eta^\frac{m-3}{2} & \mbox{ if $\eta\le \min\{H_0,H\}$}
\\
\frac{H}{\eta^2\ln^{1/3}(e+\eta)} & \mbox{ if $\eta\ge \max\{H_0,H\}$}
\\ O(1)  & \mbox{ otherwise}
\end{array}
\right\}\in L^1(\R_+).
$$
Hence by dominated convergence we conclude that
\begin{equation}\label{fp}
w(H) = \Theta_0+ \int_0^{+\infty}\frac{u(\eta\wedge H)}{\eta^2\sqrt{P(\eta)+w(\eta)}} \dd \eta \qquad \mbox{for all $H\in \R_+$.}
\end{equation}
Equation \eqref{fp} implies that $w\in C^2(\R_+)$ and that $w$ satisfies \eqref{TW-in2-eq}, and condition \eqref{TW-in2-far} follows from \eqref{w'1}. The bounds in \eqref{qwe} imply in particular that \eqref{P+w} holds. The asymptotic in  \eqref{w'0} follows from integration of \eqref{TW-in2-eq} around $H=0$. It remains to prove \eqref{add2}. We have
\begin{equation*}
w'(H) \stackrel{(\ref{fp})}= \int_H^{+\infty}\frac{u}{\eta^2\sqrt{P(\eta)+w(\eta)}} \dd \eta  \stackrel{(\ref{qwe})}\gtrsim \int_H^{+\infty}\frac{\dd \eta}{\eta^2\ln^{1/3}(e+\eta)} \quad\mbox{for $H>H_0$.}
\end{equation*}
Since $\ln(e+\eta)=\ln \eta +O(\eta^{-1})$ as $\eta\to +\infty$, we have in particular $\ln(e+\eta)\lesssim \ln \eta$ for $\eta\gg 1$. Then
$$
w'(H)\gtrsim J:=\int_H^{+\infty}\frac{\dd \eta}{\eta^2\ln^{1/3}\eta} \quad\mbox{for $H\gg 1$.}
$$
On the other hand, an integration by parts shows that
$$
J= \frac{1}{H\ln^{1/3} H} -\frac13 \int_H^{+\infty} \frac{\dd\eta}{\eta^2\ln^{4/3}\eta}
\ge \frac{1}{H\ln^{1/3} H} -\frac{J}{3\ln H}  \quad\mbox{for $H\gg 1$.}
$$
Hence $J\gtrsim (H\ln^{1/3} H)^{-1}$ for $H\gg 1$, which implies that $w'(H) \gtrsim (H\ln^{1/3}H)^{-1}$ for $H\gg 1$. Since by assumption $P'(H)=O(H^{-p})$ as $H\to +\infty$ for some $p>1$, this bound implies that $(P+w)'>0$ for $H$ sufficiently large, i.e. \eqref{add2}.
\end{proof}

\section{Proof of Theorem \ref{T1}}

Here we show how the main result of this note follows from Theorem \ref{T-uniq} and Theorem \ref{T-ex}.

\begin{proof}[Proof of Theorem \ref{T1}]
$\ $ \smallskip

{\underline{Existence}}. Assumption \eqref{hpp-ufar} is obviously satisfied in view of \eqref{P-large}. Therefore, by Theorem \ref{T-ex} there exists a solution $w$ to \eqref{TW-in2}. Noting that the change of variables \eqref{sost} entails $H_y = \pm \sqrt{2(P+w)}$, we undo it by defining
\begin{equation}\label{def-H}
H(y)=F^{-1}(y), \quad F(H)=\int_0^H \frac{1}{\sqrt{2(P(\eta)+w(\eta))}}\dd \eta,
\end{equation}
so that $H_y>0$ in $\R_+$ and $H(0)=0$ (recall that $P+w$ is bounded from below in $\R_+$ in view of \eqref{P+w}$_1$), thus proving \eqref{TW-bc0}. Since $w(0)=\Theta_0$, we have
$$
F(H)=\sqrt{\tfrac{2(m-1)}{A(m+1)^2}} H^\frac{m+1}{2} (1+o(1))\quad\mbox{as $H\to 0$}.
$$
Hence $F$ is well defined and
$$
H(y)=\left(\tfrac{A(m+1)^2}{2(m-1)}\right)^\frac{1}{m+1} y^\frac{2}{m+1} (1+o(1)) \quad\mbox{as $y\to 0$.}
$$
Using again $H_y(y) = \sqrt{2P(H(y))}(1+o(1))$ as $y\to 0$ we obtain \eqref{q0}. Since $w\in C^2(\R_+)$ solves the equation in \eqref{TW-in2-eq}, $H\in C^3(\R_+)$ solves \eqref{TW}, and \eqref{clc} follows from $w(0)=\Theta_0$.

It remains to show \eqref{qinfty}, which entails \eqref{tanner}. For that, we notice that $\psi=P+w$ satisfies
$$
H^2\psi''(H)= H^2 P''(H) - \frac{U}{\sqrt{2\psi(H)}} = \frac{U}{\sqrt{2\psi(H)}}( -1 + f(H)), \quad f(H)= \tfrac 1 U H^2P''(H)\sqrt{2\psi(H)}.
$$
Because of \eqref{P+w}$_2$ and \eqref{P-large}, $H^2P''(H) \sqrt{\psi(H)}= O(H^{1-p}\ln^{1/3}H)$ as $H\to +\infty$. Hence we may apply the arguments in \cite[Section 4]{GGO}: letting
$$
u=\frac{(2\psi)^{3/2}}{3U}, \qquad s=\ln H,
$$
the previous equation turns into
\begin{equation}\label{equ}
\frac32 u^{1/3}\left(\frac{\dd}{\dd s} -1\right)\frac{\dd u^{2/3}}{\dd s} -1 + f(s) ,\ \quad f(s)= O(s^{1/3}e^{s(1-p)}) \quad\mbox{as $s\to +\infty$}.
\end{equation}
One can easily check that the arguments in the proof of \cite[Proposition 4.1]{GGO} can be repeated line by line, provided that $H_y>0$, $\psi'>0$ for $H$ sufficiently large, and $f(s) \lesssim s^{-2}\ln s$ as $s\to +\infty$ (see also the paragraph above Proposition 4.1 in \cite{GGO}). Now, $H_y>0$ in $\R_+$ follows from \eqref{def-H}, $\psi'>0$ for $H$ sufficiently large holds true in view of \eqref{add2} (which holds true thanks to \eqref{P-large}), and the asymptotic property of $f$
is immediate from \eqref{equ}. Hence \cite[Proposition 4.1]{GGO} applies and yields existence of $a\in \R$ such that $u(s)=s-\frac13 \ln s + a +O(s^{-1}\ln s)$ as $s\to +\infty$. This translates into
$$
\tfrac{1}{3U} (2\psi(H))^{3/2} = \ln H -\tfrac13 \ln \ln H + a +O(\tfrac{\ln \ln H}{\ln H})\quad\mbox{as $H\to +\infty$,}
$$
which coincides with \eqref{qinfty} setting $B=e^a>0$.

\medskip

{\underline{Uniqueness}}. Let $H$ be any solution to \eqref{def-sol} with $H_y>0$ in $\R_+$. Then the inverse $y(H)$ is well defined. In view of \eqref{tanner},
\begin{equation}\label{wer}
\frac{H(y)}{y (3U \ln y)^{1/3}}\to 1 \ \mbox{ as $y\to +\infty$,}\quad\mbox{i.e.}\quad \frac{y(H)}{H (3U \ln H)^{-1/3}}\to 1 \ \mbox{ as $H\to +\infty$.}
\end{equation}
Let $\psi(H) = \frac12 (H_y(y(H))^2$. Since $P\in C^2(\R_+)$, the function $w(H)= \frac12 (H_y(y(H))^2- P(H)$ is of class $C^2(\R_+)$ and satisfies \eqref{TW-in2-eq}. In view of \eqref{wer}$_1$, we have
$$
H^2 P''(H) H_y \stackrel{(\ref{P-large})}= O(H^{1-p}) H_y \stackrel{(\ref{tanner})}= O((y\ln^{1/3} y)^{1-p})O(\ln^{1/3} y)=o(1) \quad\mbox{as $y\to +\infty$,}
$$
hence $H^2 H_{yyy} = -U + H^2 P''(H) H_y = -U (1+o(1))$ as $y \to +\infty$. Therefore, using again \eqref{wer},
$$
H_{yyy} = - Uy^{-2} (3U \ln y)^{-2/3}(1+o(1)) \quad\mbox{and}\quad H_{yy}= Uy^{-1} (3U \ln y)^{-2/3}(1+o(1))\quad\mbox{as $y\to +\infty$.}
$$
Recalling \eqref{P-large} and using \eqref{wer}$_2$, this means that $w'(H)= H_{yy}(y(H)) - P'(H)\to 0$ as $H\to +\infty$, which implies \eqref{TW-in2-far}. Thus $w$ is a solution to \eqref{TW-in2}. Since $w$ is unique in view of Theorem \ref{T-uniq}, the conclusion follows using $H(0)=0$: since $H_y(y)= \sqrt{2(P(H(y))+w(H(y)))}$, $H$ is uniquely determined by \eqref{def-H}.
\end{proof}

\paragraph{{\bf{Acknowledgements}}} We acknowledge support by GNAMPA Project 2026 CUP E53C25002010001.

\medskip

\paragraph{\bf{Conflict of interest}} The authors have no conflict of interest to declare.

\bibliography{bibliomod}{}
\bibliographystyle{abbrv}

\end{document}